\newtheorem{theorem}{Theorem}[section]
\newtheorem{proposition}[theorem]{Proposition}
\newtheorem{corollary}[theorem]{Corollary}
\theoremstyle{definition}
\newtheorem{definition}[theorem]{Definition}
\newtheorem{example}[theorem]{Example}
\theoremstyle{remark}
\newtheorem{remark}[theorem]{Remark}
\numberwithin{equation}{section}
\DeclareMathOperator{\Ker}{Ker}
\DeclareMathOperator{\Ig}{Im}
\DeclareMathOperator{\Hg}{H}
\DeclareMathOperator{\id}{id}
\DeclareMathOperator{\T}{\mathbf{Trip}}
\begin{document}

\allowdisplaybreaks

\title{Properties of the Secondary Hochschild Homology}

\author{Jacob Laubacher}
\address{Department of Mathematics and Statistics, Bowling Green State University, Bowling Green, Ohio 43403}
\email{jlaubac@bgsu.edu}

\subjclass[2010]{Primary 16E40; Secondary 16D90}

\date{\today}

\keywords{Hochschild homology, Morita equivalence}

\begin{abstract}
In this paper we study properties of the secondary Hochschild homology of the triple $(A,B,\varepsilon)$ with coefficients in $M$. We establish a type of Morita equivalence between two triples and show that $\Hg_\bullet((A,B,\varepsilon);M)$ is invariant under this equivalence. We also prove the existence of an exact sequence which connects the usual and the secondary Hochschild homologies in low dimension, allowing one to perform easy computations. The functoriality of $\Hg_\bullet((A,B,\varepsilon);M)$ is also discussed.
\end{abstract}

\maketitle

\section*{Introduction}

Hochschild cohomology was introduced by Hochschild in \cite{H} as a method to study extensions of an associative algebra $A$ over a field $k$. Later Gerstenhaber exploited this to study deformations in \cite{G}. It's dual, the Hochschild homology, is used as both a stepping stone towards cyclic homology and a generalization of the modules of differential forms for noncommutative $k$-algebras $A$. The groups $\Hg_\bullet(A,M)$ (where $M$ is an $A$-bimodule) are Morita invariant.

Secondary Hochschild homology was introduced in \cite{LSS} through the use of simplicial algebras and simplicial modules. The main ingredient was the bar simplicial module $\mathcal{B}(A,B,\varepsilon)$ which behaves similar to the bar resolution associated to an algebra. The groups $\Hg_\bullet((A,B,\varepsilon);M)$ involve a triple $(A,B,\varepsilon)$ which consists of a commutative $k$-algebra $B$ inducing a $B$-algebra structure on $A$ by way of a morphism $\varepsilon:B\longrightarrow A$. Just as in the usual Hochschild homology, $M$ is taken to be an $A$-bimodule, but here we add the restriction that $M$ is also $B$-symmetric.  One goal of this paper is to show that the secondary Hochschild homology has a type of Morita invariance.

This paper is organized as follows: in the first section we recall the secondary Hochschild homology. We also review some basic results so as to keep this paper self-contained. In the second section we introduce the notion of Morita equivalence between two triples $(A,B,\varepsilon)$ and $(A',B',\varepsilon')$. Here we require two additional conditions to the usual definition of Morita equivalence between two $k$-algebras. With this in hand, we prove that the secondary Hochschild homology is Morita invariant (see Theorem \ref{preserve}). In particular, we show that $\Hg_\bullet((A,B,\varepsilon);M)\cong\Hg_\bullet((M_n(A),I_n(B),\varepsilon_*);M_n(M))$. In the final section we give some computations of the secondary Hochschild homology in low dimension. When $A$ is commutative we give the relation between $\Hg_1((A,B,\varepsilon);M)$ and K\"ahler differentials (see Proposition \ref{Kahler2}). We also introduce an exact sequence \eqref{sequence} which connects $\Hg_i((A,B,\varepsilon);M)$, $\Hg_i(A,M)$, and $\Hg_1(B,M)$ (for $i=1,2$). We conclude with a discussion about functoriality.

\section{Preliminaries}

In this paper we fix $k$ to be a field. We let all tensor products be over $k$ unless otherwise stated (that is, $\otimes=\otimes_k$). Furthermore, all $k$-algebras have multiplicative unit.

Fix $A$ to be an associative $k$-algebra, $B$ a commutative $k$-algebra, and $\varepsilon:B\longrightarrow A$ a morphism of $k$-algebras such that $\varepsilon(B)\subseteq\mathcal{Z}(A)$. By referring to a triple $(A,B,\varepsilon)$, we are invoking the above conditions. To say that a triple $(A,B,\varepsilon)$ is commutative corresponds to taking $A$ commutative. Finally, we let $M$ be an $A$-bimodule which is $B$-symmetric (that is, $m\varepsilon(\alpha)=\varepsilon(\alpha)m$ for all $m\in M$ and $\alpha\in B$).

\subsection{The Hochschild homology}

Recall from \cite{H}, \cite{L}, or \cite{W} the Hochschild homology. Define $C_n(A,M)=M\otimes A^{\otimes n}$ and $d_n:C_n(A,M)\longrightarrow C_{n-1}(A,M)$ determined by
\begin{equation*}
\begin{gathered}
d_n(m\otimes a_1\otimes\cdots\otimes a_n)=ma_1\otimes a_2\otimes\cdots\otimes a_n\\
+\sum_{i=1}^{n-1}(-1)^im\otimes a_1\otimes\cdots\otimes a_ia_{i+1}\otimes\cdots\otimes a_n+(-1)^na_nm\otimes a_1\otimes\cdots\otimes a_{n-1},
\end{gathered}
\end{equation*}
where $m\in M$ and $a_i\in A$. One can show that $d_nd_{n+1}=0$. We denote the chain complex
$$
\ldots\xrightarrow{d_{n+1}}M\otimes A^{\otimes n}\xrightarrow{~d_n~}M\otimes A^{\otimes n-1}\xrightarrow{d_{n-1}}\ldots\xrightarrow{~d_3~}M\otimes A^{\otimes2}\xrightarrow{~d_2~}M\otimes A\xrightarrow{~d_1~}M\longrightarrow0
$$
by $\mathbf{C}_\bullet(A,M)$.

\begin{definition}(\cite{H})
The homology of the complex $\mathbf{C}_\bullet(A,M)$ is called the \textbf{Hochschild homology of $A$ with coefficients in $M$} and is denoted by $\Hg_\bullet(A,M)$.
\end{definition}

Of particular interest is the case when one takes $M=A$ where $A$ is commutative. As seen in most homological algebra texts (such as \cite{L} or \cite{W}), one can connect the Hochschild homology with K\"ahler differentials.

\begin{proposition}\label{HH1A}\emph{(\cite{L},\cite{W})}
For a commutative $k$-algebra $A$ and an $A$-symmetric $A$-bimodule $M$, we have that
$$\Hg_1(A,M)\cong M\otimes_A\Omega_{A|k}^1,$$
and in particular $\Hg_1(A,A)\cong\Omega_{A|k}^1$.
\end{proposition}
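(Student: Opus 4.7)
The plan is to identify $\Hg_1(A, M)$ with $M \otimes_A \Omega^1_{A|k}$ by a direct comparison of presentations in the low-degree part of the Hochschild complex. First, I would compute $\ker d_1$: since $A$ is commutative and $M$ is $A$-symmetric, $d_1(m \otimes a) = ma - am = 0$, so $\ker d_1 = M \otimes A$ and therefore
$$\Hg_1(A, M) = (M \otimes A)/\mathrm{im}(d_2),$$
where $\mathrm{im}(d_2)$ is generated by the relators $R(m, a, b) := ma \otimes b - m \otimes ab + bm \otimes a$ for $m \in M$ and $a, b \in A$.

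Next, I would define the comparison map $\phi: \Hg_1(A, M) \to M \otimes_A \Omega^1_{A|k}$ on representatives by $[m \otimes a] \mapsto m \otimes da$, and check well-definedness by applying it to $R(m, a, b)$. The resulting expression collapses to zero once one invokes the Leibniz rule $d(ab) = a\, db + b\, da$ in $\Omega^1_{A|k}$, the $A$-balance $ma \otimes db = m \otimes a\, db$ in the tensor product over $A$, and the $A$-symmetry $bm = mb$ of $M$.

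For the inverse $\psi(m \otimes da) = [m \otimes a]$, I would use the presentation of $M \otimes_A \Omega^1_{A|k}$ as the abelian group generated by symbols $m \otimes da$ subject to bilinearity in $(m,a)$ together with the single combined relation $m \otimes d(ab) = ma \otimes db + mb \otimes da$, which packages Leibniz and $A$-balance into one identity. Translating this back through $\psi$ yields exactly $[m \otimes ab] = [ma \otimes b] + [mb \otimes a]$, which holds in $\Hg_1(A, M)$ by the relator $R(m, a, b)$ combined with the $A$-symmetry of $M$. Hence $\psi$ is well-defined, and $\phi, \psi$ are visibly mutually inverse on generators. Specializing to $M = A$ and using $A \otimes_A \Omega^1_{A|k} \cong \Omega^1_{A|k}$ gives the second assertion.

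The main obstacle is the bookkeeping for $M \otimes_A \Omega^1_{A|k}$: the construction mixes three types of identifications (bilinearity of $d$, the Leibniz rule in $\Omega^1_{A|k}$, and the $A$-balance of a bimodule in a single tensor slot), all of which must compress into a single generators-and-relations description that matches the Hochschild $d_2$-relator on the nose. Once that matching is set up correctly and the $A$-symmetry of $M$ is deployed at the right moment, the isomorphism drops out immediately.
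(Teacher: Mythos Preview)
Your proposal is correct and follows the standard argument. The paper does not supply its own proof of this proposition---it is quoted as a known result from \cite{L} and \cite{W}---but your approach matches exactly the strategy the paper employs for the secondary analogue in Proposition~\ref{Kahler2}.
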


\begin{theorem}\emph{(\cite{L},\cite{Mc})}
If $(P,Q)$ gives a Morita equivalence of $k$-algebras between $A$ and $A'$, then there is a natural isomorphism
$$\Hg_\bullet(A,M)\cong\Hg_\bullet(A',Q\otimes_AM\otimes_AP).$$
\end{theorem}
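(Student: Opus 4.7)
The plan is to reduce Morita invariance of Hochschild homology to the standard fact that Morita equivalence of rings preserves Tor, applied to the enveloping algebras $A^e = A \otimes A^{\mathrm{op}}$ and $(A')^e = A' \otimes (A')^{\mathrm{op}}$.

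First, I would recall the identification $\Hg_\bullet(A,M) \cong \mathrm{Tor}_\bullet^{A^e}(M,A)$: the bar resolution of $A$ as an $A^e$-module is a free resolution, and applying $M \otimes_{A^e} -$ to it reproduces $\mathbf{C}_\bullet(A,M)$ via the canonical identification $M \otimes_{A^e} A^{\otimes n+2} \cong M \otimes A^{\otimes n}$.

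Second, I would lift the Morita equivalence $(P,Q)$ between $A$ and $A'$ to one between $A^e$ and $(A')^e$. Unpacking the hypothesis, $P$ is an $(A,A')$-bimodule and $Q$ is an $(A',A)$-bimodule with bimodule isomorphisms $P \otimes_{A'} Q \cong A$ and $Q \otimes_A P \cong A'$. From these one builds the $((A')^e, A^e)$-bimodule $Q \otimes P$ and the $(A^e, (A')^e)$-bimodule $P \otimes Q$; two applications of the original Morita isomorphisms yield
\[
(Q \otimes P) \otimes_{A^e} (P \otimes Q) \cong (A')^e, \qquad (P \otimes Q) \otimes_{(A')^e} (Q \otimes P) \cong A^e,
\]
as bimodules over the respective enveloping algebras.

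Third, the resulting equivalence of module categories sends an $A$-bimodule $N$ to $Q \otimes_A N \otimes_A P$ viewed as an $A'$-bimodule. In particular, $A \mapsto Q \otimes_A A \otimes_A P \cong A'$ and $M \mapsto Q \otimes_A M \otimes_A P$. Since Morita equivalences preserve Tor, one obtains
\[
\Hg_\bullet(A,M) \cong \mathrm{Tor}_\bullet^{A^e}(M,A) \cong \mathrm{Tor}_\bullet^{(A')^e}(Q \otimes_A M \otimes_A P, A') \cong \Hg_\bullet(A', Q \otimes_A M \otimes_A P),
\]
with naturality inherited from the functoriality of Tor.

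The main technical obstacle is the bookkeeping in the second step: verifying that the enveloping Morita equivalence really respects the various left/right and opposite-algebra structures on $P$ and $Q$, and that the associativity constraints in iterated tensor products line up so that the two isomorphisms above are genuinely bimodule maps. A concrete alternative, closer to Loday's treatment, would be to define explicit generalized trace chain maps between $\mathbf{C}_\bullet(A,M)$ and $\mathbf{C}_\bullet(A', Q \otimes_A M \otimes_A P)$ using the distinguished element of $P \otimes_{A'} Q$ corresponding to $1_A$, and then exhibit chain homotopies showing that the two compositions are homotopic to the identity; the Tor approach is cleaner but hides these homotopies inside the comparison of projective resolutions.
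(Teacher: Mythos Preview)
The paper does not actually prove this theorem; it simply cites it from Loday \cite{L} and McCarthy \cite{Mc} as background. So there is no proof in the paper to compare against directly.

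That said, your Tor-based argument is correct and is the standard conceptual proof (essentially the one in Weibel \cite{W}). It is worth noting, however, that when the paper proves the secondary analogue (Theorem~\ref{preserve}), it follows precisely the ``concrete alternative'' you sketch at the end: explicit chain maps $\psi$ and $\varphi$ built from the Morita data $(P,Q)$ and the elements $\sum p_j \otimes q_j$, $\sum q'_m \otimes p'_m$ representing the units, together with explicit presimplicial homotopies $h_i$ and $l_i$. Your Tor approach does not generalize cleanly to the secondary setting, because $\Hg_\bullet((A,B,\varepsilon);M)$ is not known to be a derived functor in the same way; the explicit-homotopy route is what actually extends. So while your proposal is fine for the classical statement as asked, the alternative you mention is the one the paper needs and uses downstream.
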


\subsection{The secondary Hochschild homology}

Recall the secondary Hochschild homology from \cite{LSS}. Define $C_n((A,B,\varepsilon);M)=M\otimes A^{\otimes n}\otimes B^{\otimes\frac{n(n-1)}{2}}$ and $\partial_n^{\varepsilon}:C_n((A,B,\varepsilon);M)\longrightarrow C_{n-1}((A,B,\varepsilon);M)$ determined by
$$
\partial_n^{\varepsilon}\left(m
\otimes
\begin{pmatrix}
a_1 & b_{1,2} & b_{1,3} & \cdots & b_{1,n-2} & b_{1,n-1} & b_{1,n}\\
1 & a_2 & b_{2,3} & \cdots & b_{2,n-2} & b_{2,n-1} & b_{2,n}\\
1 & 1 & a_3 & \cdots & b_{3,n-2} & b_{3,n-1} & b_{3,n}\\
\vdots & \vdots & \vdots & \ddots & \vdots & \vdots & \vdots\\
1 & 1 & 1 & \cdots & a_{n-2} & b_{n-2,n-1} & b_{n-2,n}\\
1 & 1 & 1 & \cdots & 1 & a_{n-1} & b_{n-1,n}\\
1 & 1 & 1 & \cdots & 1 & 1 & a_n\\
\end{pmatrix}
\right)
$$
$$
=ma_1\varepsilon(b_{1,2}b_{1,3}\cdots b_{1,n-2}b_{1,n-1}b_{1,n})
\otimes
\begin{pmatrix}
a_2 & b_{2,3} & \cdots & b_{2,n-2} & b_{2,n-1} & b_{2,n}\\
1 & a_3 & \cdots & b_{3,n-2} & b_{3,n-1} & b_{3,n}\\
\vdots & \vdots & \ddots & \vdots & \vdots & \vdots\\
1 & 1 & \cdots & a_{n-2} & b_{n-2,n-1} & b_{n-2,n}\\
1 & 1 & \cdots & 1 & a_{n-1} & b_{n-1,n}\\
1 & 1 & \cdots & 1 & 1 & a_n\\
\end{pmatrix}
$$
$$
+\sum_{i=1}^{n-1}(-1)^im
\otimes
\begin{pmatrix}
a_1 & b_{1,2} & \cdots & b_{1,i}b_{1,i+1} & \cdots & b_{1,n-1} & b_{1,n}\\
1 & a_2 & \cdots & b_{2,i}b_{2,i+1} & \cdots & b_{2,n-1} & b_{2,n}\\
\vdots & \vdots & \ddots & \vdots & \ddots & \vdots & \vdots\\
1 & 1 & \cdots & a_i\varepsilon(b_{i,i+1})a_{i+1} & \cdots & b_{i,n-1}b_{i+1,n-1} & b_{i,n}b_{i+1,n}\\
\vdots & \vdots & \ddots & \vdots & \ddots & \vdots & \vdots\\
1 & 1 & \cdots & 1 & \cdots & a_{n-1} & b_{n-1,n}\\
1 & 1 & \cdots & 1 & \cdots & 1 & a_n\\
\end{pmatrix}
$$
$$
+(-1)^na_n\varepsilon(b_{n-1,n}b_{n-2,n}\cdots b_{3,n}b_{2,n}b_{1,n})m
\otimes
\begin{pmatrix}
a_1 & b_{1,2} & b_{1,3} & \cdots & b_{1,n-2} & b_{1,n-1}\\
1 & a_2 & b_{2,3} & \cdots & b_{2,n-2} & b_{2,n-1}\\
1 & 1 & a_3 & \cdots & b_{3,n-2} & b_{3,n-1}\\
\vdots & \vdots & \vdots & \ddots & \vdots & \vdots\\
1 & 1 & 1 & \cdots & a_{n-2} & b_{n-2,n-1}\\
1 & 1 & 1 & \cdots & 1 & a_{n-1}\\
\end{pmatrix},
$$
where $m\in M$, $a_i\in A$, and $b_{i,j}\in B$. It was shown in \cite{LSS} that $\partial_n^\varepsilon\partial_{n+1}^\varepsilon=0$. We denote the chain complex
$$
\ldots\xrightarrow{\partial_{n+1}^\varepsilon}M\otimes A^{\otimes n}\otimes B^{\otimes\frac{n(n-1)}{2}}\xrightarrow{~\partial_n^\varepsilon~}M\otimes A^{\otimes n-1}\otimes B^{\otimes\frac{(n-1)(n-2)}{2}}\xrightarrow{\partial_{n-1}^\varepsilon}\ldots
$$
$$
\ldots\xrightarrow{~\partial_5^\varepsilon~}M\otimes A^{\otimes4}\otimes B^{\otimes6}\xrightarrow{~\partial_4^\varepsilon~}M\otimes A^{\otimes3}\otimes B^{\otimes3}\xrightarrow{~\partial_3^\varepsilon~}M\otimes A^{\otimes2}\otimes B\xrightarrow{~\partial_2^\varepsilon~}M\otimes A\xrightarrow{~\partial_1^\varepsilon~}M\longrightarrow0
$$
by $\mathbf{C}_\bullet((A,B,\varepsilon);M)$.

\begin{definition}\label{SHH}(\cite{LSS})
The homology of the complex $\mathbf{C}_\bullet((A,B,\varepsilon);M)$ is called the \textbf{secondary Hochschild homology of the triple $(A,B,\varepsilon)$ with coefficients in $M$} and is denoted by $\Hg_\bullet((A,B,\varepsilon);M)$.
\end{definition}

\begin{example}(\cite{LSS})
Notice that when $B=k$, we get the usual Hochschild homology. That is, $\Hg_\bullet((A,k,\varepsilon);M)=\Hg_\bullet(A,M)$, and so $\Hg_n((A,k,\varepsilon);M)\cong\Hg_n(A,M)$ for all $n\geq0$.
\end{example}

\begin{example}(\cite{LSS})
Observe that $\Hg_0((A,B,\varepsilon);M)=\Hg_0(A,M)=\frac{M}{[M,A]}$.
\end{example}

\section{Morita Equivalence of Triples}

The classical result of the usual Hochschild homology preserving Morita equivalence is well-known (see \cite{J}, \cite{L}, \cite{Mc}, or \cite{W}). In this section we establish the theory behind two triples being Morita equivalent and produce a similar result. Recall that $M$ is an $A$-bimodule which is $B$-symmetric.

\begin{definition}\label{MET}
Let $(A,B,\varepsilon)$ and $(A',B',\varepsilon')$ be two triples. We say that $(A,B,\varepsilon)$ and $(A',B',\varepsilon')$ are \textbf{Morita equivalent as triples} if
\begin{enumerate}[(i)]
\item there exists an $A-A'$-bimodule $P$ and an $A'-A$-bimodule $Q$ such that there is an isomorphism of $A$-bimodules $f:P\otimes_{A'}Q\longrightarrow A$ as well as an isomorphism of $A'$-bimodules $g:Q\otimes_AP\longrightarrow A'$,
\item there is an isomorphism of $k$-algebras $\eta:B\longrightarrow B'$, and
\item both $P$ and $Q$ are symmetric with respect to $B$ and $B'$ under $\eta$. That is,
$$\varepsilon(\alpha)p=p\varepsilon'\big(\eta(\alpha)\big)\hspace{.15in}\text{and}\hspace{.15in}q\varepsilon(\alpha)=\varepsilon'\big(\eta(\alpha)\big)q$$
for all $p\in P$, $q\in Q$, and $\alpha\in B$.
\end{enumerate}
\end{definition}

\begin{remark}
Condition (i) above says that $A$ and $A'$ are Morita equivalent as $k$-algebras. Condition (ii) says the same thing for $B$ and $B'$ since both are commutative.
\end{remark}

\begin{remark}
When $B$ and $B'$ are both equal to $k$, Definition \ref{MET} reduces to the usual definition of Morita equivalence of $k$-algebras between $A$ and $A'$.
\end{remark}

\begin{example}
Consider the triple $(A,B,\varepsilon)$. Let $e$ be an idempotent in $A$ such that $A=AeA$. Then $(A,B,\varepsilon)$ and $(eAe,B,\varepsilon_e)$ are Morita equivalent as triples where $\varepsilon_e:B\longrightarrow eAe$ is given by $\varepsilon_e(\alpha)=e\varepsilon(\alpha)e$ for all $\alpha\in B$.

It is easy to verify that $(eAe,B,\varepsilon_e)$ is a triple, and one can check the equivalence by setting $P:=Ae$, $Q:=eA$, and $\eta:=\id_B$.
\end{example}

\begin{proposition}
Morita equivalence of triples defines an equivalence relation.
\end{proposition}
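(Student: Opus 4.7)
The plan is to verify the three axioms of an equivalence relation directly, by producing explicit bimodules and algebra isomorphisms, and then checking the three conditions of Definition \ref{MET} in each case.

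For reflexivity on $(A,B,\varepsilon)$, I would take $P=Q=A$ viewed as the regular $A$-bimodule, use the multiplication isomorphism $A\otimes_A A\cong A$ for both $f$ and $g$, and set $\eta=\id_B$. Condition (i) is immediate, (ii) is trivial, and the symmetry requirement (iii) reduces to $\varepsilon(\alpha)a=a\varepsilon(\alpha)$, which holds because $\varepsilon(B)\subseteq\mathcal{Z}(A)$ by the standing assumption on triples.

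For symmetry, suppose $(A,B,\varepsilon)\sim(A',B',\varepsilon')$ via the data $(P,Q,f,g,\eta)$. Then I would simply swap the roles: use $Q$ as the $A$-$A'$-bimodule of the reversed equivalence (which is really $P$ in the new orientation), use $P$ in the other slot, and take the inverse isomorphism $\eta^{-1}:B'\longrightarrow B$. Conditions (i) and (ii) pass through trivially because isomorphisms are invertible, and condition (iii) becomes $\varepsilon'(\beta)q=q\varepsilon(\eta^{-1}(\beta))$ and its analogue for $p$, each of which is obtained by applying $\eta^{-1}$ to the corresponding hypothesis.

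The substantive step is transitivity. Suppose $(A,B,\varepsilon)\sim(A',B',\varepsilon')$ via $(P_1,Q_1,\eta_1)$ and $(A',B',\varepsilon')\sim(A'',B'',\varepsilon'')$ via $(P_2,Q_2,\eta_2)$. I would set $P:=P_1\otimes_{A'}P_2$ as an $A$-$A''$-bimodule, $Q:=Q_2\otimes_{A'}Q_1$ as an $A''$-$A$-bimodule, and $\eta:=\eta_2\circ\eta_1$. The isomorphisms for (i) come from the standard chain
\[
P\otimes_{A''}Q=P_1\otimes_{A'}P_2\otimes_{A''}Q_2\otimes_{A'}Q_1\cong P_1\otimes_{A'}A'\otimes_{A'}Q_1\cong P_1\otimes_{A'}Q_1\cong A,
\]
and symmetrically for $Q\otimes_A P\cong A''$. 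Condition (ii) is clear since the composition of $k$-algebra isomorphisms is a $k$-algebra isomorphism. The one thing that genuinely needs to be checked is condition (iii) on the composite tensor products, and this is what I expect to be the main (though still mild) obstacle. For an elementary tensor $p_1\otimes p_2\in P$ I would compute
\[
\varepsilon(\alpha)(p_1\otimes p_2)=p_1\varepsilon'(\eta_1(\alpha))\otimes p_2=p_1\otimes\varepsilon'(\eta_1(\alpha))p_2=(p_1\otimes p_2)\varepsilon''(\eta(\alpha)),
\]
using the first symmetry hypothesis for $(P_1,\eta_1)$, the tensor relation over $A'$, and the first symmetry hypothesis for $(P_2,\eta_2)$, in that order. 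The analogous computation for $Q$ uses the two hypotheses about $q$-symmetry. Extending by $k$-linearity yields (iii), completing the verification that $\sim$ is an equivalence relation.
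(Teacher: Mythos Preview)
Your proposal is correct and follows essentially the same approach as the paper. The paper simply declares reflexivity and symmetry to be clear and then proves transitivity with exactly the data $P=P_1\otimes_{A'}P_2$, $Q=Q_2\otimes_{A'}Q_1$, $\eta=\eta_2\circ\eta_1$ and the same chain of equalities you give for condition (iii); your treatment of reflexivity and symmetry is just a slightly more explicit version of what the paper leaves to the reader.
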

\begin{proof}
Morita equivalence of triples is clearly both reflexive and symmetric. We need only show that it is transitive.

Suppose that $(P_1,Q_1,\eta_1)$ gives a Morita equivalence of triples between $(A,B,\varepsilon)$ and $(A',B',\varepsilon')$, and that $(P_2,Q_2,\eta_2)$ gives a Morita equivalence of triples between $(A',B',\varepsilon')$ and $(A'',B'',\varepsilon'')$. We will show that $(A,B,\varepsilon)$ and $(A'',B'',\varepsilon'')$ are Morita equivalent as triples.

Setting $P:=P_1\otimes_{A'}P_2$ and $Q:=Q_2\otimes_{A'}Q_1$, we get the isomorphisms $P\otimes_{A''}Q\cong A$ and $Q\otimes_AP\cong A''$. Thus, (i) is satisfied. For (ii), $\eta:B\longrightarrow B''$ is defined by the composition $\eta:=\eta_2\circ\eta_1$, which is still an isomorphism. Finally for (iii) we have that
\begin{align*}
\varepsilon(\alpha)p&=\varepsilon(\alpha)(p_1\otimes_{A'}p_2)\\
&=\varepsilon(\alpha)p_1\otimes_{A'}p_2\\
&=p_1\varepsilon'\big(\eta_1(\alpha)\big)\otimes_{A'}p_2\\
&=p_1\otimes_{A'}\varepsilon'\big(\eta_1(\alpha)\big)p_2\\
&=p_1\otimes_{A'}p_2\varepsilon''\big(\eta_2\circ\eta_1(\alpha)\big)\\
&=(p_1\otimes_{A'}p_2)\varepsilon''\big(\eta(\alpha)\big)\\
&=p\varepsilon''\big(\eta(\alpha)\big).
\end{align*}
Notice that $q\varepsilon(\alpha)=\varepsilon''\big(\eta(\alpha)\big)q$ in a similar way. Thus, transitivity follows and we have that Morita equivalence of triples defines an equivalence relation.
\end{proof}

\begin{remark}\label{Bimod}
Suppose $(P,Q,\eta)$ gives a Morita equivalence of triples between $(A,B,\varepsilon)$ and $(A',B',\varepsilon')$. Then $Q\otimes_AM\otimes_AP$ is clearly an $A'$-bimodule, and is also $B'$-symmetric since
\begin{align*}
\alpha'\cdot(q\otimes_Am\otimes_Ap)&=\varepsilon'(\alpha')q\otimes_Am\otimes_Ap\\
&=q\varepsilon(\alpha)\otimes_Am\otimes_Ap\\
&=q\otimes_A\varepsilon(\alpha)m\otimes_Ap\\
&=q\otimes_Am\varepsilon(\alpha)\otimes_Ap\\
&=q\otimes_Am\otimes_A\varepsilon(\alpha)p\\
&=q\otimes_Am\otimes_Ap\varepsilon'(\alpha')\\
&=(q\otimes_Am\otimes_Ap)\cdot\alpha',
\end{align*}
where $\eta^{-1}(\alpha')=\alpha$ and thus $\eta(\alpha)=\alpha'$.
\end{remark}

\begin{theorem}\label{preserve}
If $(P,Q,\eta)$ gives a Morita equivalence of triples between $(A,B,\varepsilon)$ and $(A',B',\varepsilon')$, then there is a natural isomorphism
$$\Hg_\bullet((A,B,\varepsilon);M)\cong\Hg_\bullet((A',B',\varepsilon');Q\otimes_AM\otimes_AP).$$
\end{theorem}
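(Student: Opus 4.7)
The plan is to adapt the classical proof of Morita invariance for Hochschild homology (see e.g.\ \cite{L}, \cite{W}) by carrying the additional $B$-structure through all the maps. I would construct explicit chain maps in both directions between $\mathbf{C}_\bullet((A,B,\varepsilon);M)$ and $\mathbf{C}_\bullet((A',B',\varepsilon');Q\otimes_A M\otimes_A P)$, then show they induce mutually inverse isomorphisms on homology, using all three pieces of Morita data $(f,g,\eta)$ from Definition \ref{MET}.

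The main construction is a generalized trace
$$\Psi_\bullet\colon \mathbf{C}_\bullet((A',B',\varepsilon');Q\otimes_A M\otimes_A P)\to\mathbf{C}_\bullet((A,B,\varepsilon);M).$$
Via $g^{-1}\colon A'\xrightarrow{\cong}Q\otimes_A P$, expand each $A'$-slot as $q_i\otimes_A p_i$, collapse adjacent $P\otimes_{A'}Q$ pairs into $A$ using $f$, translate each $B'$-entry to $B$ via $\eta^{-1}$, and absorb the ``cyclic end'' $f(p_n\otimes_{A'} q_0)\in A$ into the $M$-slot via the $A$-bimodule structure. Explicitly, a typical generator
$$(q_0\otimes_A m\otimes_A p_0)\otimes(q_1\otimes_A p_1)\otimes\cdots\otimes(q_n\otimes_A p_n)\otimes\textstyle\prod b'_{i,j}$$
is sent to
$$f(p_n\otimes_{A'} q_0)\,m\otimes f(p_0\otimes_{A'} q_1)\otimes\cdots\otimes f(p_{n-1}\otimes_{A'} q_n)\otimes\textstyle\prod\eta^{-1}(b'_{i,j}).$$
Well-definedness on the tensors over $A$ and $A'$ is a routine check using that $f$ and $g$ are bimodule maps. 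An inverse-direction map $\Phi_\bullet$ is built symmetrically using the unit decomposition $1_{A'}=g(\sum_j q'_j\otimes_A p'_j)$, the map $\eta$, and $f^{-1}$.

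The remaining steps are to verify that $\Psi_\bullet$ and $\Phi_\bullet$ are chain maps and that $\Psi\circ\Phi$ and $\Phi\circ\Psi$ induce the identity on homology (either directly after the Morita simplifications, or via an explicit chain homotopy modeled on Loday's classical construction). Naturality in $M$ is immediate from the formulas, as each step is functorial in the $A$-bimodule input.

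The main obstacle is verifying the chain-map property. The differential $\partial^{\varepsilon'}$ has $n+1$ terms that each distribute $\varepsilon'$-scalars across the surviving $A'$-slots in an intricate way, and after pushing through $\Psi$ each such $\varepsilon'(b')$ factor must reappear as an $\varepsilon(\eta^{-1}(b'))$ factor on the $A$-side in precisely the position demanded by $\partial^\varepsilon$. Condition (iii) of Definition \ref{MET} --- the $B$-symmetry of $P$ and $Q$ with respect to $\eta$ --- is exactly what permits $\varepsilon'(\eta(\alpha))$ to pass across $\otimes_{A'}$, turning into $\varepsilon(\alpha)$, which then moves freely across $\otimes_A$ because $M$ is $B$-symmetric. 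The bookkeeping is tedious but each step is forced, so that once the classical Morita invariance handles the $A$/$A'$-level calculation, what remains is to check that the scalar $\varepsilon$-factors track correctly under $\eta$.
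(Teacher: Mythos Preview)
Your approach is essentially the same as the paper's: both adapt Loday's classical Morita-invariance argument by constructing explicit chain maps $\psi,\varphi$ between the two secondary Hochschild complexes using the bimodule isomorphisms $f,g$ and the algebra isomorphism $\eta$, then exhibit presimplicial homotopies showing $\varphi\circ\psi$ and $\psi\circ\varphi$ are homotopic to the respective identities. The paper writes its maps via unit decompositions $f(\sum p_j\odot q_j)=1_A$ and $g(\sum q'_m\ominus p'_m)=1_{A'}$ (rather than through $g^{-1}$ directly), and gives the homotopies explicitly, but this is only a difference in bookkeeping; your identification of condition~(iii) as the mechanism that lets $\varepsilon'(\eta(\alpha))$ pass across $\otimes_{A'}$ and reappear as $\varepsilon(\alpha)$ is exactly the point.
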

\begin{proof}
For ease of notation, throughout this proof we denote $\ominus:=\otimes_A$ and $\odot:=\otimes_{A'}$ where appropriate. We will follow the line of proof from \cite{L} and recall that $f:P\odot Q\longrightarrow A$ and $g:Q\ominus P\longrightarrow A'$ are bimodule isomorphisms. Observe that $f$ and $g$ satisfy
\begin{equation}\label{Lod}
q_1f(p_1\odot q_2)=g(q_1\ominus p_1)q_2\hspace{.15in}\text{and}\hspace{.15in}p_1g(q_1\ominus p_2)=f(q_1\odot q_1)p_2
\end{equation}
for all $p_1,p_2\in P$ and $q_1,q_2\in Q$. One can then view $f$ and $g$ as ring homomorphisms with the product defined as follows:
$$(p_1\odot q_1)(p_2\odot q_2)=p_1\odot g(q_1\ominus p_2)q_2\hspace{.15in}\text{and}\hspace{.15in}(q_1\ominus p_1)(q_2\ominus p_2)=q_1\ominus f(p_1\odot q_2)p_2.$$

Next, because $f$ and $g$ are isomorphisms, there exists $p_1,\ldots,p_s\in P$ and $q_1,\ldots,q_s\in Q$, as well as $p_1',\ldots,p_t'\in P$ and $q_1',\ldots,q_t'\in Q$, such that
$$f\left(\sum_{j=1}^sp_j\odot q_j\right)=1_A\hspace{.15in}\text{and}\hspace{.15in}g\left(\sum_{m=1}^tq_m'\ominus p_m'\right)=1_{A'}.$$

For every $n\geq0$ define $\psi_n:M\otimes A^{\otimes n}\otimes B^{\otimes\frac{n(n-1)}{2}}\longrightarrow(Q\otimes_AM\otimes_AP)\otimes {A'}^{\otimes n}\otimes{B'}^{\otimes\frac{n(n-1)}{2}}$ by
$$
\psi_n\left(m
\otimes
\begin{pmatrix}
a_1 & b_{1,2} & \cdots & b_{1,n-1} & b_{1,n}\\
1 & a_2 & \cdots & b_{2,n-1} & b_{2,n}\\
\vdots & \vdots & \ddots & \vdots & \vdots\\
1 & 1 & \cdots & a_{n-1} & b_{n-1,n}\\
1 & 1 & \cdots & 1 & a_n\\
\end{pmatrix}
\right)=\sum q_{j_0}\otimes_Am\otimes_Ap_{j_1}
$$
$$
\otimes
\begin{pmatrix}
g(q_{j_1}\ominus a_1p_{j_2}) & \eta(b_{1,2}) & \cdots & \eta(b_{1,n-1}) & \eta(b_{1,n})\\
1 & g(q_{j_2}\ominus a_2p_{j_3}) & \cdots & \eta(b_{2,n-1}) & \eta(b_{2,n})\\
\vdots & \vdots & \ddots & \vdots & \vdots\\
1 & 1 & \cdots & g(q_{j_{n-1}}\ominus a_{n-1}p_{j_n}) & \eta(b_{n-1,n})\\
1 & 1 & \cdots & 1 & g(q_{j_n}\ominus a_np_{j_0})\\
\end{pmatrix},
$$
where the sum is taken over all sets of indices $(j_0,j_1,\ldots,j_n)$ such that $1\leq j_i\leq s$ for $0\leq i\leq n$. Furthermore define $\varphi_n:(Q\otimes_AM\otimes_AP)\otimes {A'}^{\otimes n}\otimes{B'}^{\otimes\frac{n(n-1)}{2}}\longrightarrow M\otimes A^{\otimes n}\otimes B^{\otimes\frac{n(n-1)}{2}}$ determined by
$$
\varphi_n\left(q\otimes_Am\otimes_Ap
\otimes
\begin{pmatrix}
a_1' & b_{1,2}' & \cdots & b_{1,n-1}' & b_{1,n}'\\
1 & a_2' & \cdots & b_{2,n-1}' & b_{2,n}'\\
\vdots & \vdots & \ddots & \vdots & \vdots\\
1 & 1 & \cdots & a_{n-1}' & b_{n-1,n}'\\
1 & 1 & \cdots & 1 & a_n'\\
\end{pmatrix}
\right)=\sum f(p_{m_0}'\odot q)mf(p\odot q_{m_1}')
$$
$$
\otimes
\begin{pmatrix}
f(p_{m_1}'\odot a_1'q_{m_2}') & \eta^{-1}(b_{1,2}') & \cdots & \eta^{-1}(b_{1,n-1}') & \eta^{-1}(b_{1,n}')\\
1 & f(p_{m_2}'\odot a_2'q_{m_3}') & \cdots & \eta^{-1}(b_{2,n-1}') & \eta^{-1}(b_{2,n}')\\
\vdots & \vdots & \ddots & \vdots & \vdots\\
1 & 1 & \cdots & f(p_{m_{n-1}}'\odot a_{n-1}'q_{m_n}') & \eta^{-1}(b_{n-1,n}')\\
1 & 1 & \cdots & 1 & f(p_{m_n}'\odot a_n'q_{m_0}')\\
\end{pmatrix},
$$
where the sum is taken over all sets of indices $(m_0,m_1,\ldots,m_n)$ such that $1\leq m_i\leq t$ for $0\leq i\leq n$. Both $\psi$ and $\varphi$ are morphisms of complexes due to \eqref{Lod}.

There is a presimplicial homotopy $h$ between the composite $\varphi\circ\psi$ and $\id_{\mathbf{C}_\bullet((A,B,\varepsilon);M)}$ given by
$$
h_i\left(m
\otimes
\begin{pmatrix}
a_1 & b_{1,2} & \cdots & b_{1,n-1} & b_{1,n}\\
1 & a_2 & \cdots & b_{2,n-1} & b_{2,n}\\
\vdots & \vdots & \ddots & \vdots & \vdots\\
1 & 1 & \cdots & a_{n-1} & b_{n-1,n}\\
1 & 1 & \cdots & 1 & a_n\\
\end{pmatrix}
\right)=\sum mf(p_{j_0}\odot q_{m_0}')
$$
$$
\resizebox{\linewidth}{!}{$
\otimes\begin{pmatrix}
f(p_{m_0}'\odot q_{j_0})a_1f(p_{j_1}\odot q_{m_1}') & \cdots & b_{1,i} & 1 & b_{1,i+1} & \cdots & b_{1,n}\\
\vdots & \ddots & \vdots & \vdots & \vdots & \ddots & \vdots\\
1 & \cdots & f(p_{m_{i-1}}'\odot q_{j_{i-1}})a_if(p_{j_i}\odot q_{m_i}') & 1 & b_{i,i+1} & \cdots & b_{i,n}\\
1 & \cdots & 1 & f(p_{m_i}'\odot q_{j_i}) & 1 & \cdots & 1\\
1 & \cdots & 1 & 1 & a_{i+1} & \cdots & b_{i+1,n}\\
\vdots & \ddots & \vdots & \vdots & \vdots & \ddots & \vdots\\
1 & \cdots & 1 & 1 & 1 & \cdots & a_n\\
\end{pmatrix}
$}
$$
where the sum is taken over all sets of indices $(j_0,\ldots,j_i)$ and $(m_0,\ldots,m_i)$ such that $1\leq j_*\leq s$ and $1\leq m_*\leq t$. Likewise, there is a presimplicial homotopy $l$ between $\psi\circ\varphi$ and $\id_{\mathbf{C}_\bullet((A',B',\varepsilon');Q\otimes_AM\otimes_AP)}$ given by
$$
l_i\left(q\otimes_Am\otimes_Ap
\otimes
\begin{pmatrix}
a_1' & b_{1,2}' & \cdots & b_{1,n-1}' & b_{1,n}'\\
1 & a_2' & \cdots & b_{2,n-1}' & b_{2,n}'\\
\vdots & \vdots & \ddots & \vdots & \vdots\\
1 & 1 & \cdots & a_{n-1}' & b_{n-1,n}'\\
1 & 1 & \cdots & 1 & a_n'\\
\end{pmatrix}
\right)=\sum q\otimes_Am\otimes_Apg(q_{m_0}'\ominus p_{j_0})
$$
$$
\resizebox{\linewidth}{!}{$
\otimes\begin{pmatrix}
g(q_{j_0}\ominus p_{m_0}')a_1'g(q_{m_1}'\ominus p_{j_1}) & \cdots & b_{1,i}' & 1 & b_{1,i+1}' & \cdots & b_{1,n}'\\
\vdots & \ddots & \vdots & \vdots & \vdots & \ddots & \vdots\\
1 & \cdots & g(q_{j_{i-1}}\ominus p_{m_{i-1}}')a_i'g(q_{m_i}'\ominus p_{j_i}) & 1 & b_{i,i+1}' & \cdots & b_{i,n}'\\
1 & \cdots & 1 & g(q_{j_i}\ominus p_{m_i}') & 1 & \cdots & 1\\
1 & \cdots & 1 & 1 & a_{i+1}' & \cdots & b_{i+1,n}'\\
\vdots & \ddots & \vdots & \vdots & \vdots & \ddots & \vdots\\
1 & \cdots & 1 & 1 & 1 & \cdots & a_n'\\
\end{pmatrix}
$}
$$
where the sum is taken over all sets of indices $(j_0,\ldots,j_i)$ and $(m_0,\ldots,m_i)$ such that $1\leq j_*\leq s$ and $1\leq m_*\leq t$.

One can verify that both the $h_i$'s and $l_i$'s form a presimplicial homotopy. Thus, $\varphi\circ\psi$ is homotopic to the identity on the complex $\mathbf{C}_\bullet((A,B,\varepsilon);M)$, and $\psi\circ\varphi$ is homotopic to the identity on the complex $\mathbf{C}_\bullet((A',B',\varepsilon');Q\otimes_AM\otimes_AP)$.

Hence, our desired isomorphism at the level of homology follows.
\end{proof}

Consider a triple $(A,B,\varepsilon)$. Define
$$I_n(B):=B\cdot I_n=\left\{\begin{pmatrix} \alpha & \cdots & 0\\ \vdots & \ddots & \vdots\\ 0 & \cdots & \alpha\\ \end{pmatrix}~:~\alpha\in B\right\}.$$
Notice that $M_n(A)$ is an associative $k$-algebra and $I_n(B)$ is a commutative $k$-algebra, both with multiplicative unit. Furthermore, $\varepsilon:B\longrightarrow A$ induces the map $\varepsilon_*:I_n(B)\longrightarrow M_n(A)$ given by
$$\varepsilon_*\left(\begin{pmatrix} \alpha & \cdots & 0\\ \vdots & \ddots & \vdots\\ 0 & \cdots & \alpha\\ \end{pmatrix}\right)=\begin{pmatrix} \varepsilon(\alpha) & \cdots & 0\\ \vdots & \ddots & \vdots\\ 0 & \cdots & \varepsilon(\alpha)\\ \end{pmatrix}.$$
Oberve $\varepsilon_*(I_n(B))\subseteq\mathcal{Z}(M_n(A))$ and hence $(M_n(A),I_n(B),\varepsilon_*)$ is a triple.

\begin{proposition}
We have that $(A,B,\varepsilon)$ and $(M_n(A),I_n(B),\varepsilon_*)$ are Morita equivalent as triples. In particular,
$$\Hg_\bullet((A,B,\varepsilon);M)\cong\Hg_\bullet((M_n(A),I_n(B),\varepsilon_*);M_n(M)).$$
\end{proposition}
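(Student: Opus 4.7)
The plan is to exhibit an explicit Morita equivalence of triples $(P,Q,\eta)$ between $(A,B,\varepsilon)$ and $(M_n(A),I_n(B),\varepsilon_*)$ and then feed it into Theorem \ref{preserve}; the isomorphism at the level of secondary Hochschild homology will follow once the coefficient module is identified with $M_n(M)$.

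First I would import the standard Morita data: let $P:=A^n$ be row vectors, an $A$-$M_n(A)$-bimodule under scalar multiplication on the left and matrix multiplication on the right, and let $Q:=A^n$ be column vectors with the dual bimodule structure. Classical Morita theory supplies bimodule isomorphisms
$$f\colon P\otimes_{M_n(A)}Q\longrightarrow A,\qquad (a_1,\ldots,a_n)\otimes(b_1,\ldots,b_n)^T\longmapsto\sum_{i=1}^n a_ib_i,$$
$$g\colon Q\otimes_AP\longrightarrow M_n(A),\qquad (a_i)\otimes(b_j)\longmapsto(a_ib_j)_{i,j},$$
which settles condition (i) of Definition \ref{MET}. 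For (ii) I would define $\eta\colon B\longrightarrow I_n(B)$ by $\eta(\alpha)=\alpha I_n$, which is visibly a $k$-algebra isomorphism.

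Condition (iii) is the only place where the hypotheses specific to triples enter. For a row $p=(a_1,\ldots,a_n)\in P$ and $\alpha\in B$, one computes
$$\varepsilon(\alpha)p=(\varepsilon(\alpha)a_1,\ldots,\varepsilon(\alpha)a_n)=(a_1\varepsilon(\alpha),\ldots,a_n\varepsilon(\alpha))=p\,\varepsilon_*(\eta(\alpha)),$$
where the middle equality uses exactly $\varepsilon(\alpha)\in\mathcal{Z}(A)$. The symmetric computation works for $Q$, so $(P,Q,\eta)$ is a genuine Morita equivalence of triples, and Theorem \ref{preserve} yields
$$\Hg_\bullet((A,B,\varepsilon);M)\cong\Hg_\bullet((M_n(A),I_n(B),\varepsilon_*);\,Q\otimes_A M\otimes_A P).$$

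The remaining step, which I expect to be the most computational but still routine part, is to identify $Q\otimes_A M\otimes_A P$ with $M_n(M)$ as $M_n(A)$-bimodules. The map sending a pure tensor $q\otimes_A m\otimes_A p$ to the matrix $(q_i m\, p_j)_{i,j}\in M_n(M)$ is well-defined over $A$ and is an $M_n(A)$-bimodule homomorphism, with inverse built from standard basis columns and rows exactly as in the classical Morita identification $Q\otimes_A A\otimes_A P\cong M_n(A)$. The $I_n(B)$-symmetry of the target module is automatic from Remark \ref{Bimod}, so substituting $Q\otimes_A M\otimes_A P\cong M_n(M)$ into the displayed isomorphism completes the proof.
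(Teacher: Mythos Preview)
Your proposal is correct and follows essentially the same approach as the paper's own proof: the same row/column vector bimodules $P$ and $Q$, the same isomorphism $\eta(\alpha)=\alpha I_n$, the same centrality-based verification of condition (iii), and the same appeal to Theorem~\ref{preserve} followed by the identification $Q\otimes_A M\otimes_A P\cong M_n(M)$. You simply spell out the maps $f$, $g$, and the final bimodule identification more explicitly than the paper does.
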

\begin{proof}
Let $P$ be the module of row vectors $\begin{pmatrix} a_1 & a_2 & \cdots & a_n \end{pmatrix}$ of length $n$, and $Q$ be the module of column vectors $\begin{pmatrix} a_1 & a_2 & \cdots & a_n \end{pmatrix}^T$ of length $n$, both with entries from $A$. Note that $P$ is an $A-M_n(A)$-bimodule and $Q$ is an $M_n(A)-A$-bimodule with the actions of matrix multiplication. This yields natural bimodule isomorphisms $f:P\otimes_{M_n(A)}Q\longrightarrow A$ and $g:Q\otimes_AP\longrightarrow M_n(A)$. This is condition (i), which is the usual Morita equivalence between $A$ and $M_n(A)$. One can see \cite{L} or \cite{W} for more details.

Next, there is a natural isomorphism $\eta:B\longrightarrow I_n(B)$ given by
$$\eta(\alpha)=\begin{pmatrix} \alpha & \cdots & 0\\ \vdots & \ddots & \vdots\\ 0 & \cdots & \alpha\\ \end{pmatrix}$$
for all $\alpha\in B$. This establishes (ii).

For (iii) we have that
\begin{align*}
\varepsilon(\alpha)p&=\varepsilon(\alpha)\begin{pmatrix} a_1 & a_2 & \cdots & a_n \end{pmatrix}\\
&=\begin{pmatrix} \varepsilon(\alpha)a_1 & \varepsilon(\alpha)a_2 & \cdots & \varepsilon(\alpha)a_n \end{pmatrix}\\
&=\begin{pmatrix} a_1\varepsilon(\alpha) & a_2\varepsilon(\alpha) & \cdots & a_n\varepsilon(\alpha) \end{pmatrix}\\
&=\begin{pmatrix} a_1 & a_2 & \cdots & a_n \end{pmatrix}\begin{pmatrix} \varepsilon(\alpha) & 0 & \cdots & 0\\ 0 & \varepsilon(\alpha) & \cdots & 0\\ \vdots & \vdots & \ddots & \vdots\\ 0 & 0 & \cdots & \varepsilon(\alpha) \end{pmatrix}\\
&=\begin{pmatrix} a_1 & a_2 & \cdots & a_n \end{pmatrix}\varepsilon_*\left(\begin{pmatrix} \alpha & 0 & \cdots & 0\\ 0 & \alpha & \cdots & 0\\ \vdots & \vdots & \ddots & \vdots\\ 0 & 0 & \cdots & \alpha \end{pmatrix}\right)\\
&=\begin{pmatrix} a_1 & a_2 & \cdots & a_n \end{pmatrix}\varepsilon_*\big(\eta(\alpha)\big)\\
&=p\varepsilon_*\big(\eta(\alpha)\big).
\end{align*}
Observe $q\varepsilon(\alpha)=\varepsilon_*\big(\eta(\alpha)\big)q$ follows identically. Thus, $(A,B,\varepsilon)$ and $(M_n(A),I_n(B),\varepsilon_*)$ are Morita equivalent as triples.

For the isomorphism we invoke Theorem \ref{preserve} where $Q\otimes_AM\otimes_AP$ reduces to $M_n(M)$.
\end{proof}

\begin{remark}
One can also apply this concept of Morita equivalence of triples to the secondary Hochschild cohomology $\Hg^\bullet((A,B,\varepsilon);M)$, which was introduced in \cite{S} and studied more extensively in \cite{CSS}, \cite{LSS}, and \cite{SS}.
\end{remark}

\section{Computations and Functoriality}

Our goal in this section is to establish some computations of $\Hg_\bullet((A,B,\varepsilon);M)$ in low dimension, along with basic properties of its functoriality. The cohomology analogue of this section was done in \cite{SS}. First, recall the following maps used to define the secondary Hochschild homology.

\begin{remark}
We have that
$$\partial_1^{\varepsilon}(m\otimes a)=ma-am,$$
$$
\partial_2^{\varepsilon}\left(m\otimes
\begin{pmatrix}
a & \alpha\\
1 & b\\
\end{pmatrix}
\right)
=ma\varepsilon(\alpha)\otimes b-m\otimes a\varepsilon(\alpha)b+b\varepsilon(\alpha)m\otimes a,
$$
and
\begin{equation*}
\begin{gathered}
\partial_3^{\varepsilon}\left(m\otimes
\begin{pmatrix}
a & \alpha & \beta\\
1 & b & \gamma\\
1 & 1 & c\\
\end{pmatrix}
\right)
=ma\varepsilon(\alpha\beta)\otimes
\begin{pmatrix}
b & \gamma\\
1 & c\\
\end{pmatrix}
-m\otimes
\begin{pmatrix}
a\varepsilon(\alpha)b & \beta\gamma\\
1 & c\\
\end{pmatrix}\\
+m\otimes
\begin{pmatrix}
a & \alpha\beta\\
1 & b\varepsilon(\gamma)c\\
\end{pmatrix}
-c\varepsilon(\beta\gamma)m\otimes
\begin{pmatrix}
a & \alpha\\
1 & b\\
\end{pmatrix}.
\end{gathered}
\end{equation*}
\end{remark}

\subsection{Low-level computations}

We've seen that $\Hg_\bullet(A,M)$ relates to $k$-linear K\"ahler differentials (see Proposition \ref{HH1A}). It turns out that $\Hg_\bullet((A,B,\varepsilon);M)$ also corresponds to differentials, but in this case are $B$-linear.

\begin{proposition}\label{Kahler2}
For a commutative triple $(A,B,\varepsilon)$ and an $A$-symmetric $A$-bimodule $M$, we have that
$$\Hg_1((A,B,\varepsilon);M)\cong M\otimes_A\Omega_{A|B}^1,$$
and in particular $\Hg_1((A,B,\varepsilon);A)\cong\Omega_{A|B}^1$.
\end{proposition}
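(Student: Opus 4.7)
The plan is to adapt the standard proof of Proposition~\ref{HH1A} (that $\Hg_1(A,M)\cong M\otimes_A\Omega_{A|k}^1$) by constructing mutually inverse maps at the level of chains and then checking well-definedness in both directions. Since $A$ is commutative and $M$ is $A$-symmetric, $\partial_1^{\varepsilon}(m\otimes a)=ma-am=0$, so $\Hg_1((A,B,\varepsilon);M)=(M\otimes A)/\Ig\partial_2^{\varepsilon}$. Using the $A$-symmetry of $M$ and commutativity of $A$, the explicit formula for $\partial_2^{\varepsilon}$ recalled above shows that $\Ig\partial_2^{\varepsilon}$ is generated by elements of the form
$$ma\varepsilon(\alpha)\otimes b-m\otimes a\varepsilon(\alpha)b+mb\varepsilon(\alpha)\otimes a.$$

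First I would define $\Phi\colon M\otimes A\longrightarrow M\otimes_A\Omega_{A|B}^1$ by $m\otimes a\mapsto m\otimes_A da$, and check that $\Phi$ annihilates the generator above by applying the Leibniz rule to $d(a\varepsilon(\alpha)b)$ and invoking $B$-linearity of $d$ (i.e., $d\varepsilon(\alpha)=0$); the three resulting terms collapse after sliding the $A$-scalars across $\otimes_A$. This produces the induced map $\bar\Phi$ on $\Hg_1((A,B,\varepsilon);M)$.

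In the reverse direction, I would define $\Psi\colon M\otimes_A\Omega_{A|B}^1\longrightarrow \Hg_1((A,B,\varepsilon);M)$ on generators by $m\otimes_A da\mapsto[m\otimes a]$. Well-definedness amounts to verifying three relations in $\Hg_1$: additivity in $a$ (immediate); the Leibniz identity $[m\otimes ab]=[ma\otimes b]+[mb\otimes a]$, which is precisely the statement that $\partial_2^{\varepsilon}\bigl(m\otimes\begin{pmatrix}a & 1\\ 1 & b\end{pmatrix}\bigr)$ is a boundary (after using symmetry); and finally $[m\otimes\varepsilon(\alpha)]=0$. The step I expect to be the main obstacle is this last vanishing: evaluating $\partial_2^{\varepsilon}\bigl(m\otimes\begin{pmatrix}1 & \alpha\\ 1 & 1\end{pmatrix}\bigr)$ yields $2m\varepsilon(\alpha)\otimes 1-m\otimes\varepsilon(\alpha)$, while $\partial_2^{\varepsilon}\bigl(m\varepsilon(\alpha)\otimes\begin{pmatrix}1 & 1\\ 1 & 1\end{pmatrix}\bigr)=m\varepsilon(\alpha)\otimes 1$, so combining the two boundaries forces $[m\otimes\varepsilon(\alpha)]=0$.

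Once both maps are in hand, $\bar\Phi$ and $\Psi$ are visibly inverse to each other on the generators $[m\otimes a]\longleftrightarrow m\otimes_A da$, so $\bar\Phi$ is the desired isomorphism. Specializing to $M=A$ (which is $A$-symmetric over itself) recovers $\Hg_1((A,B,\varepsilon);A)\cong\Omega_{A|B}^1$.
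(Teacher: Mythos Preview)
Your proposal is correct and follows essentially the same approach as the paper: both construct the forward map $[m\otimes a]\mapsto m\otimes_A da$ and the inverse $m\otimes_A a\,db\mapsto[ma\otimes b]$ (equivalently $m\otimes_A da\mapsto[m\otimes a]$), verify well-definedness via the Leibniz relation coming from $\partial_2^\varepsilon$ with $\alpha=1_B$, and observe the two are mutual inverses. Your explicit verification that $[m\otimes\varepsilon(\alpha)]=0$ (needed for $B$-linearity of the inverse map) is a detail the paper leaves implicit, so your write-up is in fact slightly more complete on that point.
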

\begin{proof}
Since $M$ is $A$-symmetric, we get that the map $\partial_1^{\varepsilon}:M\otimes A\longrightarrow M$ is trivial. Therefore $\Hg_1((A,B,\varepsilon);M)$ is the quotient of $M\otimes A$ by the relation
\begin{equation}\label{brel}
ma\varepsilon(\alpha)\otimes b-m\otimes a\varepsilon(\alpha)b+b\varepsilon(\alpha)m\otimes a=0.
\end{equation}
The map $\Hg_1((A,B,\varepsilon);M)\longrightarrow M\otimes_A\Omega_{A|B}^1$ sends the class of $m\otimes a$ to $m\otimes_A d(a)$. Notice this is well-defined because \eqref{brel} maps to
$$ma\varepsilon(\alpha)\otimes_A d(b)-m\otimes_A d(a\varepsilon(\alpha) b)+b\varepsilon(\alpha)m\otimes_A d(a)=0$$
due to $B$-linearity.

Moreover, the map $M\otimes_A\Omega_{A|B}^1\longrightarrow\Hg_1((A,B,\varepsilon);M)$ sends $m\otimes_A ad(b)$ to the class of $ma\otimes b$, which is a cycle because $A$ is commutative and $M$ is $A$-symmetric. This is well-defined because $m\otimes_A d(ab)-m\otimes_A ad(b)-m\otimes_A bd(a)$ maps to
$$
m\otimes ab-ma\otimes b-mb\otimes a=0
$$
when we take $\alpha=1_B$ in \eqref{brel}.

Finally observe the two maps are inverses of each other, and the isomorphism follows.
\end{proof}

\begin{remark}
When $B=k$, Proposition \ref{Kahler2} reduces to Proposition \ref{HH1A}.
\end{remark}

\begin{example}
With $B=A$ (and in particular, $A$ is commutative and $\varepsilon=\id$), we have that $\Hg_1((A,A,\id);M)=0$ as consequence of Proposition \ref{Kahler2}.
\end{example}

\subsection{An exact sequence}

Next we show that the following sequence is exact for a triple $(A,B,\varepsilon)$:
\begin{equation}\label{sequence}
\begin{aligned}
\Hg_2(A,M)\xrightarrow{~\Phi^2~}\Hg_2((A,B,\varepsilon);M)\xrightarrow{~\Psi~}\Hg_1(B&,M)\xrightarrow{~\varepsilon_*~}\Hg_1(A,M)\\
&\xrightarrow{~\Phi^1~}\Hg_1((A,B,\varepsilon);M)\longrightarrow0.
\end{aligned}
\end{equation}
Define the above maps as follows:
$$\Phi^2(m\otimes a\otimes b)=m\otimes\begin{pmatrix} a & 1_B\\ 1 & b\\ \end{pmatrix},$$
$$\Psi\left(m\otimes\begin{pmatrix} a & \alpha\\ 1 & b\\ \end{pmatrix}\right)=bma\otimes\alpha,$$
$$\varepsilon_*(m\otimes\alpha)=m\otimes\varepsilon(\alpha),$$
and
$$\Phi^1(m\otimes a)=m\otimes a.$$
One can verify that these maps are well-defined.

\begin{proposition}\label{result}
Concerning the chain \eqref{sequence},
\begin{enumerate}[(i)]
\item $\Ig(\Phi^2)\subseteq\Ker(\Psi)$,
\item $\Ker(\Psi)\subseteq\Ig(\Phi^2)$,
\item $\Ig(\Psi)\subseteq\Ker(\varepsilon_*)$,
\item $\Ker(\varepsilon_*)\subseteq\Ig(\Psi)$,
\item $\Ig(\varepsilon_*)\subseteq\Ker(\Phi^1)$,
\item $\Ker(\Phi^1)\subseteq\Ig(\varepsilon_*)$, and
\item $\Phi^1$ is surjective.
\end{enumerate}
In particular,
\begin{align*}
\Hg_2(A,M)\xrightarrow{~\Phi^2~}\Hg_2((A,B,\varepsilon);M)\xrightarrow{~\Psi~}\Hg_1(B&,M)\xrightarrow{~\varepsilon_*~}\Hg_1(A,M)\\
&\xrightarrow{~\Phi^1~}\Hg_1((A,B,\varepsilon);M)\longrightarrow0
\end{align*}
is exact.
\end{proposition}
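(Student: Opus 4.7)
The plan is to verify each of the seven inclusions (i)--(vii) by explicit chain-level calculation. Throughout, three ingredients carry the load: the $B$-symmetry of $M$, the centrality of $\varepsilon(B)$ in $A$, and the observation that $d_2^B(m\otimes 1_B\otimes 1_B)=m\otimes 1_B$, which makes every element of $M\otimes k\cdot 1_B$ a $B$-boundary. As a preliminary I would confirm that all four maps descend to homology: substituting $1_B$ for every $b_{ij}$ collapses $\partial_n^\varepsilon$ into $d_n^A$, so $\Phi^1$ and $\Phi^2$ are chain maps, while the images of $\Psi$ and $\varepsilon_*$ automatically consist of cycles (since $d_1^B$ and $d_1^A$ vanish on $M\otimes B$ and $M\otimes\varepsilon(B)$ by $B$-symmetry), and an explicit computation expresses $\Psi\circ\partial_3^\varepsilon$ as a combination of $d_2^B$-boundaries.

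Inclusions (i), (v), and
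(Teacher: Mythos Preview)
Your outline matches the paper's strategy: the author also dismisses (i), (v), (vii) as immediate, and handles the remaining four inclusions by explicit chain-level manipulations using exactly the ingredients you list ($B$-symmetry of $M$, centrality of $\varepsilon(B)$, and the fact that $m\otimes 1$ is a boundary). Your remarks on well-definedness of the four maps are also in line with what is needed.

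However, your proposal is cut off before reaching any of the substantive inclusions, and the real work lies there. Parts (iii) and (vi) are short: each reduces to applying $d_2^A$ to two well-chosen elements of $M\otimes A\otimes A$ and combining the resulting identities with the cycle condition. Part (iv) is a direct construction: given $m\otimes\alpha$ with $m\otimes\varepsilon(\alpha)$ an $A$-boundary, one finds $n\otimes a\otimes b$ with $d_2^A(n\otimes a\otimes b)=\partial_2^\varepsilon\bigl(m\otimes\begin{smallmatrix}1_A&\alpha\\1&1_A\end{smallmatrix}\bigr)$, and then $m\otimes\begin{smallmatrix}1_A&\alpha\\1&1_A\end{smallmatrix}-n\otimes\begin{smallmatrix}a&1_B\\1&b\end{smallmatrix}$ is a secondary 2-cycle mapping to $m\otimes\alpha$ under $\Psi$.

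Part (ii) is by far the longest, and nothing in your outline indicates how you would approach it. The paper's argument is not a single boundary trick: starting from a secondary 2-cycle $m\otimes\begin{smallmatrix}a&\alpha\\1&b\end{smallmatrix}$ with $bma\otimes\alpha$ a $B$-boundary, one uses a chain of four explicit $\partial_3^\varepsilon$-boundaries to rewrite the class as a sum of seven terms, each with $1_B$ in the upper-right slot, and then separately checks (via another computation using the original cycle condition and the $B$-boundary hypothesis) that the corresponding element of $M\otimes A\otimes A$ is a $d_2^A$-cycle. Unless you have a shorter route in mind, your write-up should at least sketch this reduction; as it stands the proposal gives no evidence that (ii) can be closed.
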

\begin{proof}
First observe that the class of elements of the form $m\otimes 1$ is zero in $\Hg_1(A,M)$, $\Hg_1(B,M)$, and $\Hg_1((A,B,\varepsilon);M)$. Parts $(i)$, $(v)$, and $(vii)$ are clear.

For $(ii)$, we take $m\otimes\begin{pmatrix} a & \alpha\\ 1 & b\\ \end{pmatrix}\in\Hg_2((A,B,\varepsilon);M)$ such that $bma\otimes\alpha=0$ in $\Hg_1(B,M)$ (that is, $m\otimes\begin{pmatrix} a & \alpha\\ 1 & b\\ \end{pmatrix}\in\Ker(\Psi)$). This means that our element is a boundary, and so there exists $n\in M$ and $\beta,\gamma\in B$ such that $d_2^B(n\otimes\beta\otimes\gamma)=bma\otimes\alpha$. Thus, we get that
\begin{align*}
bma\otimes\alpha=n\varepsilon(\beta)\otimes\gamma-n\otimes\beta\gamma+\varepsilon(\gamma)n\otimes\beta.
\end{align*}
Tensoring by $\otimes1_A\otimes1_A$ we now have
\begin{equation}\label{e1}
bma\otimes\begin{pmatrix} 1_A & \alpha\\ 1 & 1_A\\ \end{pmatrix}=n\varepsilon(\beta)\otimes\begin{pmatrix} 1_A & \gamma\\ 1 & 1_A\\ \end{pmatrix}-n\otimes\begin{pmatrix} 1_A & \beta\gamma\\ 1 & 1_A\\ \end{pmatrix}+\varepsilon(\gamma)n\otimes\begin{pmatrix} 1_A & \beta\\ 1 & 1_A\\ \end{pmatrix}.
\end{equation}
Further, we observe the following boundaries:
\begin{equation}\label{e3}
\begin{gathered}
\partial_3^{\varepsilon}\left(n\otimes\begin{pmatrix} 1_A & 1_B & \beta\\ 1 & 1_A & \gamma\\ 1 & 1 & 1_A\\ \end{pmatrix}\right)=n\varepsilon(\beta)\otimes\begin{pmatrix} 1_A & \gamma\\ 1 & 1_A\\ \end{pmatrix}-n\otimes\begin{pmatrix} 1_A & \beta\gamma\\ 1 & 1_A\\ \end{pmatrix}\\
+n\otimes\begin{pmatrix} 1_A & \beta\\ 1 & \varepsilon(\gamma)\\ \end{pmatrix}-\varepsilon(\beta)\varepsilon(\gamma)n\otimes\begin{pmatrix} 1_A & 1_B\\ 1 & 1_A\\ \end{pmatrix},
\end{gathered}
\end{equation}
and
\begin{equation}\label{e4}
\begin{gathered}
\partial_3^{\varepsilon}\left(n\otimes\begin{pmatrix} 1_A & \beta & 1_B\\ 1 & 1_A & 1_A\\ 1 & 1 & \varepsilon(\gamma)\\ \end{pmatrix}\right)=n\varepsilon(\beta)\otimes\begin{pmatrix} 1_A & 1_B\\ 1 & \varepsilon(\gamma)\\ \end{pmatrix}-n\otimes\begin{pmatrix} \varepsilon(\beta) & 1_B\\ 1 & \varepsilon(\gamma)\\ \end{pmatrix}\\
+n\otimes\begin{pmatrix} 1_A & \beta\\ 1 & \varepsilon(\gamma)\\ \end{pmatrix}-\varepsilon(\gamma)n\otimes\begin{pmatrix} 1_A & \beta\\ 1 & 1_A\\ \end{pmatrix}.
\end{gathered}
\end{equation}
Thus, we have that
\begin{align*}
bma\otimes&\begin{pmatrix} 1_A & \alpha\\ 1 & 1_A\\ \end{pmatrix}=n\varepsilon(\beta)\otimes\begin{pmatrix} 1_A & \gamma\\ 1 & 1_A\\ \end{pmatrix}-n\otimes\begin{pmatrix} 1_A & \beta\gamma\\ 1 & 1_A\\ \end{pmatrix}+\varepsilon(\gamma)n\otimes\begin{pmatrix} 1_A & \beta\\ 1 & 1_A\\ \end{pmatrix} \text{~~by \eqref{e1}}\\
&=\varepsilon(\beta)\varepsilon(\gamma)n\otimes\begin{pmatrix} 1_A & 1_B\\ 1 & 1_A\\ \end{pmatrix}-n\otimes\begin{pmatrix} 1_A & \beta\\ 1 & \varepsilon(\gamma)\\ \end{pmatrix}+\varepsilon(\gamma)n\otimes\begin{pmatrix} 1_A & \beta\\ 1 & 1_A\\ \end{pmatrix} \text{~~by \eqref{e3}}\\
&=\varepsilon(\beta)\varepsilon(\gamma)n\otimes\begin{pmatrix} 1_A & 1_B\\ 1 & 1_A\\ \end{pmatrix}-n\otimes\begin{pmatrix} \varepsilon(\beta) & 1_B\\ 1 & \varepsilon(\gamma)\\ \end{pmatrix}+n\varepsilon(\beta)\otimes\begin{pmatrix} 1_A & 1_B\\ 1 & \varepsilon(\gamma)\\ \end{pmatrix} \text{~~by \eqref{e4}}.
\end{align*}
We want to keep track of this, so formally observe from above, 
\begin{equation}\label{e5}
\begin{gathered}
bma\otimes\begin{pmatrix} 1_A & \alpha\\ 1 & 1_A\\ \end{pmatrix}\\
=\varepsilon(\beta)\varepsilon(\gamma)n\otimes\begin{pmatrix} 1_A & 1_B\\ 1 & 1_A\\ \end{pmatrix}-n\otimes\begin{pmatrix} \varepsilon(\beta) & 1_B\\ 1 & \varepsilon(\gamma)\\ \end{pmatrix}+n\varepsilon(\beta)\otimes\begin{pmatrix} 1_A & 1_B\\ 1 & \varepsilon(\gamma)\\ \end{pmatrix}.
\end{gathered}
\end{equation}
Next we will employ the two boundaries
\begin{equation}\label{e6}
\begin{gathered}
\partial_3^{\varepsilon}\left(m\otimes\begin{pmatrix} a & \alpha & 1_B\\ 1 & 1_A & 1_B\\ 1 & 1 & b\\ \end{pmatrix}\right)=ma\varepsilon(\alpha)\otimes\begin{pmatrix} 1_A & 1_B\\ 1 & b\\ \end{pmatrix}-m\otimes\begin{pmatrix} a\varepsilon(\alpha) & 1_B\\ 1 & b\\ \end{pmatrix}\\
+m\otimes\begin{pmatrix} a & \alpha\\ 1 & b\\ \end{pmatrix}-bm\otimes\begin{pmatrix} a & \alpha\\ 1 & 1_A\\ \end{pmatrix},
\end{gathered}
\end{equation}
and
\begin{equation}\label{e7}
\begin{gathered}
\partial_3^{\varepsilon}\left(bm\otimes\begin{pmatrix} a & 1_B & 1_B\\ 1 & 1_A & \alpha\\ 1 & 1 & 1_A\\ \end{pmatrix}\right)=bma\otimes\begin{pmatrix} 1_A & \alpha\\ 1 & 1_A\\ \end{pmatrix}-bm\otimes\begin{pmatrix} a & \alpha\\ 1 & 1_A\\ \end{pmatrix}\\
+bm\otimes\begin{pmatrix} a & 1_B\\ 1 & \varepsilon(\alpha)\\ \end{pmatrix}-\varepsilon(\alpha)bm\otimes\begin{pmatrix} a & 1_B\\ 1 & 1_A\\ \end{pmatrix}.
\end{gathered}
\end{equation}
So in $\Hg_2((A,B,\varepsilon);M)$, we have that
\begin{align*}
m\otimes\begin{pmatrix} a & \alpha\\ 1 & b\\ \end{pmatrix}&=bm\otimes\begin{pmatrix} a & \alpha\\ 1 & 1_A\\ \end{pmatrix}-ma\varepsilon(\alpha)\otimes\begin{pmatrix} 1_A & 1_B\\ 1 & b\\ \end{pmatrix}+m\otimes\begin{pmatrix} a\varepsilon(\alpha) & 1_B\\ 1 & b\\ \end{pmatrix} \text{~~by \eqref{e6}}\\
&=bma\otimes\begin{pmatrix} 1_A & \alpha\\ 1 & 1_A\\ \end{pmatrix}-\varepsilon(\alpha)bm\otimes\begin{pmatrix} a & 1_B\\ 1 & 1_A\\ \end{pmatrix}+bm\otimes\begin{pmatrix} a & 1_B\\ 1 & \varepsilon(\alpha)\\ \end{pmatrix}\\
&~-ma\varepsilon(\alpha)\otimes\begin{pmatrix} 1_A & 1_B\\ 1 & b\\ \end{pmatrix}+m\otimes\begin{pmatrix} a\varepsilon(\alpha) & 1_B\\ 1 & b\\ \end{pmatrix} \text{~~by \eqref{e7}}\\
&=\varepsilon(\beta)\varepsilon(\gamma)n\otimes\begin{pmatrix} 1_A & 1_B\\ 1 & 1_A\\ \end{pmatrix}-n\otimes\begin{pmatrix} \varepsilon(\beta) & 1_B\\ 1 & \varepsilon(\gamma)\\ \end{pmatrix}+n\varepsilon(\beta)\otimes\begin{pmatrix} 1_A & 1_B\\ 1 & \varepsilon(\gamma)\\ \end{pmatrix}\\
&~-\varepsilon(\alpha)bm\otimes\begin{pmatrix} a & 1_B\\ 1 & 1_A\\ \end{pmatrix}+bm\otimes\begin{pmatrix} a & 1_B\\ 1 & \varepsilon(\alpha)\\ \end{pmatrix}-ma\varepsilon(\alpha)\otimes\begin{pmatrix} 1_A & 1_B\\ 1 & b\\ \end{pmatrix}\\
&~+m\otimes\begin{pmatrix} a\varepsilon(\alpha) & 1_B\\ 1 & b\\ \end{pmatrix} \text{~~by \eqref{e5}}.
\end{align*}
Notice that we have expressed $m\otimes\begin{pmatrix} a & \alpha\\ 1 & b\\ \end{pmatrix}$ as a sum of seven elements with $1_B$ in the upper right of the matrix. So formally, we note
\begin{equation}\label{e8}
\begin{gathered}
m\otimes\begin{pmatrix} a & \alpha\\ 1 & b\\ \end{pmatrix}=\varepsilon(\beta)\varepsilon(\gamma)n\otimes\begin{pmatrix} 1_A & 1_B\\ 1 & 1_A\\ \end{pmatrix}-n\otimes\begin{pmatrix} \varepsilon(\beta) & 1_B\\ 1 & \varepsilon(\gamma)\\ \end{pmatrix}\\
+n\varepsilon(\beta)\otimes\begin{pmatrix} 1_A & 1_B\\ 1 & \varepsilon(\gamma)\\ \end{pmatrix}-\varepsilon(\alpha)bm\otimes\begin{pmatrix} a & 1_B\\ 1 & 1_A\\ \end{pmatrix}+bm\otimes\begin{pmatrix} a & 1_B\\ 1 & \varepsilon(\alpha)\\ \end{pmatrix}\\
-ma\varepsilon(\alpha)\otimes\begin{pmatrix} 1_A & 1_B\\ 1 & b\\ \end{pmatrix}+m\otimes\begin{pmatrix} a\varepsilon(\alpha) & 1_B\\ 1 & b\\ \end{pmatrix}.
\end{gathered}
\end{equation}
Next we see that
\begin{align*}
\Phi^2&\Big(\varepsilon(\beta)\varepsilon(\gamma)n\otimes1_A\otimes1_A-n\otimes\varepsilon(\beta)\otimes\varepsilon(\gamma)+n\varepsilon(\beta)\otimes1_A\otimes\varepsilon(\gamma)\\
&~-\varepsilon(\alpha)bm\otimes a\otimes1_A+bm\otimes a\otimes\varepsilon(\alpha)-ma\varepsilon(\alpha)\otimes1_A\otimes b+m\otimes a\varepsilon(\alpha)\otimes b\Big)\\
&=\varepsilon(\beta)\varepsilon(\gamma)n\otimes\begin{pmatrix} 1_A & 1_B\\ 1 & 1_A\\ \end{pmatrix}-n\otimes\begin{pmatrix} \varepsilon(\beta) & 1_B\\ 1 & \varepsilon(\gamma)\\ \end{pmatrix}+n\varepsilon(\beta)\otimes\begin{pmatrix} 1_A & 1_B\\ 1 & \varepsilon(\gamma)\\ \end{pmatrix}\\
&~-\varepsilon(\alpha)bm\otimes\begin{pmatrix} a & 1_B\\ 1 & 1_A\\ \end{pmatrix}+bm\otimes\begin{pmatrix} a & 1_B\\ 1 & \varepsilon(\alpha)\\ \end{pmatrix}-ma\varepsilon(\alpha)\otimes\begin{pmatrix} 1_A & 1_B\\ 1 & b\\ \end{pmatrix}+m\otimes\begin{pmatrix} a\varepsilon(\alpha) & 1_B\\ 1 & b\\ \end{pmatrix}\\
&=m\otimes\begin{pmatrix} a & \alpha\\ 1 & b\\ \end{pmatrix} \text{~~by \eqref{e8}}.
\end{align*}
Thus, we will have that $\Ker(\Psi)\subseteq\Ig(\Phi^2)$ if only we can show that
$$
\varepsilon(\beta)\varepsilon(\gamma)n\otimes1_A\otimes1_A-n\otimes\varepsilon(\beta)\otimes\varepsilon(\gamma)+n\varepsilon(\beta)\otimes1_A\otimes\varepsilon(\gamma)
$$
$$
-\varepsilon(\alpha)bm\otimes a\otimes1_A+bm\otimes a\otimes\varepsilon(\alpha)-ma\varepsilon(\alpha)\otimes1_A\otimes b+m\otimes a\varepsilon(\alpha)\otimes b
$$
is in $\Hg_2(A,M)$. For that, we need to show that it goes to zero under the map $d_2^A$.

Since $m\otimes\begin{pmatrix} a & \alpha\\ 1 & b\\ \end{pmatrix}\in\Hg_2((A,B,\varepsilon);M)$, we have that
\begin{align}\label{e9}
ma\varepsilon(\alpha)\otimes b-m\otimes a\varepsilon(\alpha)b+b\varepsilon(\alpha)m\otimes a=0.
\end{align}
Moreover, by applying $\partial_2^{\varepsilon}$ to both sides of \eqref{e1}, we get
\begin{equation*}
\begin{gathered}
bma\varepsilon(\alpha)\otimes1_A-bma\otimes\varepsilon(\alpha)+\varepsilon(\alpha)bma\otimes1_A\\
=n\varepsilon(\beta)\varepsilon(\gamma)\otimes1_A-n\varepsilon(\beta)\otimes\varepsilon(\gamma)+\varepsilon(\gamma)n\varepsilon(\beta)\otimes1_A-n\varepsilon(\beta)\varepsilon(\gamma)\otimes1_A\\
+n\otimes\varepsilon(\beta)\varepsilon(\gamma)-\varepsilon(\beta)\varepsilon(\gamma)n\otimes1_A+\varepsilon(\gamma)n\varepsilon(\beta)\otimes1_A-\varepsilon(\gamma)n\otimes\varepsilon(\beta)+\varepsilon(\beta)\varepsilon(\gamma)n\otimes1_A,
\end{gathered}
\end{equation*}
which simplifies to
\begin{equation}\label{e2}
\begin{gathered}
bma\varepsilon(\alpha)\otimes1_A-bma\otimes\varepsilon(\alpha)+\varepsilon(\alpha)bma\otimes1_A=\\
n\varepsilon(\beta)\varepsilon(\gamma)\otimes1_A-n\varepsilon(\beta)\otimes\varepsilon(\gamma)+n\otimes\varepsilon(\beta)\varepsilon(\gamma)-\varepsilon(\gamma)n\otimes\varepsilon(\beta)+\varepsilon(\beta)\varepsilon(\gamma)n\otimes1_A.
\end{gathered}
\end{equation}
Thus we have that
\begin{align*}
d_2^A\Big(&\varepsilon(\beta)\varepsilon(\gamma)n\otimes1_A\otimes1_A-n\otimes\varepsilon(\beta)\otimes\varepsilon(\gamma)+n\varepsilon(\beta)\otimes1_A\otimes\varepsilon(\gamma)\\
&~-\varepsilon(\alpha)bm\otimes a\otimes1_A+bm\otimes a\otimes\varepsilon(\alpha)-ma\varepsilon(\alpha)\otimes1_A\otimes b+m\otimes a\varepsilon(\alpha)\otimes b\Big)\\
&=\varepsilon(\beta)\varepsilon(\gamma)n\otimes1_A-\varepsilon(\beta)\varepsilon(\gamma)n\otimes1_A+\varepsilon(\beta)\varepsilon(\gamma)n\otimes1_A-n\varepsilon(\beta)\otimes\varepsilon(\gamma)\\
&~+n\otimes\varepsilon(\beta)\varepsilon(\gamma)-\varepsilon(\gamma)n\otimes\varepsilon(\beta)+n\varepsilon(\beta)\otimes\varepsilon(\gamma)-n\varepsilon(\beta)\otimes\varepsilon(\gamma)\\
&~+\varepsilon(\gamma)n\varepsilon(\beta)\otimes1_A-\varepsilon(\alpha)bma\otimes1_A+\varepsilon(\alpha)bm\otimes a-\varepsilon(\alpha)bm\otimes a\\
&~+bma\otimes\varepsilon(\alpha)-bm\otimes a\varepsilon(\alpha)+\varepsilon(\alpha)bm\otimes a-ma\varepsilon(\alpha)\otimes b\\
&~+ma\varepsilon(\alpha)\otimes b-bma\varepsilon(\alpha)\otimes1_A+ma\varepsilon(\alpha)\otimes b-m\otimes a\varepsilon(\alpha)b+bm\otimes a\varepsilon(\alpha)\\
&=n\varepsilon(\beta)\varepsilon(\gamma)\otimes1_A-n\varepsilon(\beta)\otimes\varepsilon(\gamma)+n\otimes\varepsilon(\beta)\varepsilon(\gamma)-\varepsilon(\gamma)n\otimes\varepsilon(\beta)\\
&~+\varepsilon(\beta)\varepsilon(\gamma)n\otimes1_A-bma\varepsilon(\alpha)\otimes1_A+bma\otimes\varepsilon(\alpha)-\varepsilon(\alpha)bma\otimes1_A\\
&~+ma\varepsilon(\alpha)\otimes b-m\otimes a\varepsilon(\alpha)b+\varepsilon(\alpha)bm\otimes a \text{~by simplifying}\\
&=bma\varepsilon(\alpha)\otimes1_A-bma\otimes\varepsilon(\alpha)+\varepsilon(\alpha)bma\otimes1_A-bma\varepsilon(\alpha)\otimes1_A+bma\otimes\varepsilon(\alpha)\\
&~-\varepsilon(\alpha)bma\otimes1_A+ma\varepsilon(\alpha)\otimes b-m\otimes a\varepsilon(\alpha)b+\varepsilon(\alpha)bm\otimes a \text{~by \eqref{e2}}\\
&=ma\varepsilon(\alpha)\otimes b-m\otimes a\varepsilon(\alpha)b+\varepsilon(\alpha)bm\otimes a \text{~by simplifying}\\
&=0 \text{~by \eqref{e9}},
\end{align*}
which was what we wanted. Hence $\Ker(\Psi)\subseteq\Ig(\Phi^2)$.

For $(iii)$, it suffices to show that $\varepsilon_*\circ\Psi=0$. We begin by taking $m\otimes\begin{pmatrix} a & \alpha\\ 1 & b\\ \end{pmatrix}\in\Hg_2((A,B,\varepsilon);M)$, and we want to conclude that $bma\otimes\varepsilon(\alpha)=0$ in $\Hg_1(A,M)$. Notice:
\begin{align}\label{a1}
ma\varepsilon(\alpha)\otimes b-m\otimes a\varepsilon(\alpha)b+ b\varepsilon(\alpha)m\otimes a=0
\end{align}
since $m\otimes\begin{pmatrix} a & \alpha\\ 1 & b\\ \end{pmatrix}\in\Hg_2((A,B,\varepsilon);M)$, as well as the two boundaries in $\Hg_1(A,M)$:
\begin{align}\label{a2}
d_2^A(bm\otimes a\otimes\varepsilon(\alpha))=bma\otimes\varepsilon(\alpha)-bm\otimes a\varepsilon(\alpha)+\varepsilon(\alpha)bm\otimes a,
\end{align}
and
\begin{align}\label{a3}
d_2^A(m\otimes a\varepsilon(\alpha)\otimes b)=ma\varepsilon(\alpha)\otimes b-m\otimes a\varepsilon(\alpha)b+bm\otimes a\varepsilon(\alpha).
\end{align}
Now we note that
\begin{align*}
bma\otimes\varepsilon(\alpha)&=bm\otimes a\varepsilon(\alpha)-\varepsilon(\alpha)bm\otimes a \text{~~by \eqref{a2}}\\
&=-ma\varepsilon(\alpha)\otimes b+m\otimes a\varepsilon(\alpha)b-\varepsilon(\alpha)bm\otimes a \text{~~by \eqref{a3}}\\
&=0 \text{~~by \eqref{a1}}.
\end{align*}
This establishes $(iii)$, and so $\Ig(\Psi)\subseteq\Ker(\varepsilon_*)$.

For $(iv)$, we take $m\otimes\alpha\in\Hg_1(B,M)$ such that $m\otimes\varepsilon(\alpha)=0$ in $\Hg_1(A,M)$. We want to show that $m\otimes\alpha$ is the image of some element under $\Psi$. Since $m\varepsilon(\alpha)\otimes1_A=0$ in $\Hg_1(A,M)$, we have that
$$m\varepsilon(\alpha)\otimes1_A-m\otimes\varepsilon(\alpha)+\varepsilon(\alpha)m\otimes1_A$$
also equals zero in $\Hg_1(A,M)$. Thus, this element is a boundary, which means there exists some $a,b\in A$ and $n\in M$ such that
$$d_2^A(n\otimes a\otimes b)=m\varepsilon(\alpha)\otimes1_A-m\otimes\varepsilon(\alpha)+\varepsilon(\alpha)m\otimes1_A.$$
Next note that
\begin{align*}
\partial_2^{\varepsilon}\left(m\otimes
\begin{pmatrix}
1_A & \alpha\\
1 & 1_A\\
\end{pmatrix}\right)&=m\varepsilon(\alpha)\otimes1_A-m\otimes\varepsilon(\alpha)+\varepsilon(\alpha)m\otimes1_A\\
&=d_2^A(n\otimes a\otimes b)\\
&=\partial_2^{\varepsilon}\left(n\otimes
\begin{pmatrix}
a & 1_B\\
1 & b\\
\end{pmatrix}\right).
\end{align*}
Since $\partial_2^{\varepsilon}\left(
m\otimes
\begin{pmatrix}
1_A & \alpha\\
1 & 1_A\\
\end{pmatrix}-n\otimes
\begin{pmatrix}
a & 1_B\\
1 & b\\
\end{pmatrix}
\right)=0$, we have $m\otimes
\begin{pmatrix}
1_A & \alpha\\
1 & 1_A\\
\end{pmatrix}-n\otimes
\begin{pmatrix}
a & 1_B\\
1 & b\\
\end{pmatrix}\in\Hg_2((A,B,\varepsilon);M)$. Finally notice that
$$\Psi\left(
m\otimes
\begin{pmatrix}
1_A & \alpha\\
1 & 1_A\\
\end{pmatrix}-n\otimes
\begin{pmatrix}
a & 1_B\\
1 & b\\
\end{pmatrix}
\right)=m\otimes\alpha-bna\otimes1_B=m\otimes\alpha.$$
Hence $\Ker(\varepsilon_*)\subseteq\Ig(\Psi)$.

For $(vi)$, we take $m\otimes a\in\Hg_1(A,M)$ such that $m\otimes a=0$ in $\Hg_1((A,B,\varepsilon);M)$. We want to show that $m\otimes a$ is the image of some element under $\varepsilon_*$. Since $m\otimes a=0$ in $\Hg_1((A,B,\varepsilon);M)$, this means that it is a boundary. Therefore, there exists some $b,c\in A$, $n\in M$, and $\alpha\in B$ such that
$$\partial_2^{\varepsilon}\left(n\otimes
\begin{pmatrix} b & \alpha\\ 1 & c\\ \end{pmatrix}\right)=m\otimes a.$$
Observe:
\begin{align}\label{b1}
nb\varepsilon(\alpha)\otimes c-n\otimes b\varepsilon(\alpha)c+c\varepsilon(\alpha)n\otimes b=m\otimes a
\end{align}
by above, as well as the two boundaries in $\Hg_1(A,M)$:
\begin{align}\label{b2}
d_2^A(cn\otimes b\otimes\varepsilon(\alpha))=cnb\otimes\varepsilon(\alpha)-cn\otimes b\varepsilon(\alpha)+\varepsilon(\alpha)cn\otimes b,
\end{align}
and
\begin{align}\label{b3}
d_2^A(n\otimes b\varepsilon(\alpha)\otimes c)=nb\varepsilon(\alpha)\otimes c-n\otimes b\varepsilon(\alpha)c+cn\otimes b\varepsilon(\alpha).
\end{align}
Now we note that
\begin{align*}
-cnb\otimes\varepsilon(\alpha)&=-cn\otimes b\varepsilon(\alpha)+\varepsilon(\alpha)cn\otimes b \text{~~by \eqref{b2}}\\
&=nb\varepsilon(\alpha)\otimes c-n\otimes b\varepsilon(\alpha)c+\varepsilon(\alpha)cn\otimes b \text{~~by \eqref{b3}}\\
&=m\otimes a \text{~~by \eqref{b1}}.
\end{align*}
Thus, we notice that $-cnb\otimes\alpha\in\Hg_1(B,M)$ because $-cnb\varepsilon(\alpha)+\varepsilon(\alpha)cnb=0$ due to the fact that $M$ is $B$-symmetric, and
$$\varepsilon_*(-cnb\otimes\alpha)=-cnb\otimes\varepsilon(\alpha)=m\otimes a.$$
This establishes $\Ker(\Phi^1)\subseteq\Ig(\varepsilon_*)$ and completes our proof.
\end{proof}

\begin{corollary}[First Fundamental Exact Sequence for $\Omega$]\emph{(\cite{Mat},\cite{W})}
Let $k\longrightarrow B\longrightarrow A$ be morphisms of commutative algebras. Then there is an exact sequence of $A$-modules:
$$
A\otimes_B\Omega_{B|k}^1\longrightarrow\Omega_{A|k}^1\longrightarrow\Omega_{A|B}^1\longrightarrow0.
$$
\end{corollary}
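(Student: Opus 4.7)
My plan is to deduce this corollary as a direct specialization of the exact sequence \eqref{sequence} from Proposition \ref{result}. Given morphisms $k\longrightarrow B\longrightarrow A$ of commutative algebras, the triple $(A,B,\varepsilon)$ is commutative (so $\varepsilon(B)\subseteq\mathcal{Z}(A)=A$ automatically), and $A$ is naturally an $A$-symmetric $A$-bimodule as well as a $B$-symmetric $B$-bimodule via $\varepsilon$ (the latter because $B$ is commutative). Setting $M:=A$ in Proposition \ref{result} and truncating \eqref{sequence} to its last three nonzero terms yields the exact sequence
$$\Hg_1(B,A)\xrightarrow{~\varepsilon_*~}\Hg_1(A,A)\xrightarrow{~\Phi^1~}\Hg_1((A,B,\varepsilon);A)\longrightarrow 0.$$

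The next step is to translate each Hochschild group into its K\"ahler counterpart. Proposition \ref{HH1A} applied to the commutative algebra $A$ gives $\Hg_1(A,A)\cong\Omega_{A|k}^1$, while the same proposition applied to $B$ with coefficients in the $B$-symmetric $B$-bimodule $A$ yields $\Hg_1(B,A)\cong A\otimes_B\Omega_{B|k}^1$. Proposition \ref{Kahler2}, which is tailored to $\Hg_1$ of a commutative triple, provides $\Hg_1((A,B,\varepsilon);A)\cong\Omega_{A|B}^1$. Splicing these three isomorphisms into the truncated sequence produces
$$A\otimes_B\Omega_{B|k}^1\longrightarrow\Omega_{A|k}^1\longrightarrow\Omega_{A|B}^1\longrightarrow 0,$$
and the exactness is inherited from Proposition \ref{result}.

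The only remaining task is a naturality check to confirm the transported arrows coincide with the classical ones. Unwinding the explicit formulas, $\varepsilon_*(a\otimes\alpha)=a\otimes\varepsilon(\alpha)$ becomes $a\otimes_B d_B(\alpha)\longmapsto a\cdot d_A(\varepsilon(\alpha))$, which is the canonical morphism induced by $\varepsilon$, and $\Phi^1(a\otimes b)=a\otimes b$ becomes $a\cdot d_A(b)\longmapsto a\cdot d_{A|B}(b)$, the canonical projection coming from the universal property of $\Omega_{A|k}^1$. No substantive obstacle is anticipated at this stage, since all of the homological content was already absorbed into Proposition \ref{result}; the corollary reduces to matching formulas through the Hochschild-to-differentials dictionary of Propositions \ref{HH1A} and \ref{Kahler2}.
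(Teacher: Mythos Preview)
Your proposal is correct and follows essentially the same approach as the paper: set $M=A$ in the exact sequence of Proposition~\ref{result} and identify the three $\Hg_1$ terms via Propositions~\ref{HH1A} and~\ref{Kahler2}. The paper's proof is terser (it omits your naturality check of the transported maps), but the substance is identical.
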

\begin{proof}
Notice that we have the morphisms $k\longrightarrow B\longrightarrow A$, the first coming from $B$ being a $k$-algebra, and the second being $\varepsilon$. Apply Propositions \ref{HH1A}, \ref{Kahler2}, and \ref{result} with $A$ commutative and $M=A$.
\end{proof}

\begin{example}
Since $\Hg_n(k,M)=0$ for all $n>0$, note that $\Hg_1((k,B,\varepsilon);M)=0$ and $\Hg_2((k,B,\varepsilon);M)\cong\Hg_1(B,M)\cong M\otimes_B\Omega_{B|k}^1$ as consequence of Propositions \ref{HH1A} and \ref{result}. Again using the exact sequence \eqref{sequence}, one has $\Hg_1((A,A,\id);M)=\Hg_2((A,A,\id);M)=0$.
\end{example}

\subsection{Functoriality}

Recall that for the usual Hochschild homology, $\Hg_\bullet(A,M)$ is a covariant functor in $M$. It can also be seen as functorial in $A$ in a certain sense. In this section we establish similar results for the secondary Hochschild homology.

First we introduce the category of triples $(A,B,\varepsilon)$ over $k$, denoted $\T-k$. Here the objects are triples $(A,B,\varepsilon)$, and a morphism between two triples $(A,B,\varepsilon)$ and $(A',B',\varepsilon')$ is a pair $(f,g)$ where $f:A\longrightarrow A'$ and $g:B\longrightarrow B'$ are morphisms of $k$-algebras such that $f\circ\varepsilon=\varepsilon'\circ g$. In other words, the following diagram commutes:
\begin{equation}\label{ComD}
\begin{aligned}
\begin{tikzpicture}[scale=3]
\node (a) at (0,0) {$B'$};
\node (b) at (1.5,0) {$A'$};
\node (c) at (0,.75) {$B$};
\node (d) at (1.5,.75) {$A$};
\path[->,font=\small,>=angle 90]
(a) edge node [above] {$\varepsilon'$} (b)
(d) edge node [right] {$f$} (b)
(c) edge node [left] {$g$} (a)
(c) edge node [above] {$\varepsilon$} (d);
\end{tikzpicture}
\end{aligned}
\end{equation}
Composition is done in the natural way, and it is easy to verify that $\T-k$ is a category.

\begin{remark}
Secondary Hochschild homology is functorial in $M$ since $f:M\longrightarrow M'$ induces a map
$$f_*:\Hg_\bullet((A,B,\varepsilon);M)\longrightarrow\Hg_\bullet((A,B,\varepsilon);M')$$
where
$$f_*\left(m
\otimes
\begin{pmatrix}
a_1 & b_{1,2} & \cdots & b_{1,n-1} & b_{1,n}\\
1 & a_2 & \cdots & b_{2,n-1} & b_{2,n}\\
\vdots & \vdots & \ddots & \vdots & \vdots\\
1 & 1 & \cdots & a_{n-1} & b_{n-1,n}\\
1 & 1 & \cdots & 1 & a_n\\
\end{pmatrix}
\right)=
f(m)\otimes\begin{pmatrix}
a_1 & b_{1,2} & \cdots & b_{1,n-1} & b_{1,n}\\
1 & a_2 & \cdots & b_{2,n-1} & b_{2,n}\\
\vdots & \vdots & \ddots & \vdots & \vdots\\
1 & 1 & \cdots & a_{n-1} & b_{n-1,n}\\
1 & 1 & \cdots & 1 & a_n\\
\end{pmatrix}.$$
Secondary Hochschild homology is also functorial in $(A,B,\varepsilon)$ in a certain way. Let the pair $(f,g):(A,B,\varepsilon)\longrightarrow(A',B',\varepsilon')$ be a morphism of triples. Furthermore, let $M'$ be an $A'$-bimodule which is $B'$-symmetric. Notice that $M'$ can be considered an $A$-bimodule under the rule
$$a\cdot m'=f(a)m' \hspace{.15in}\text{and}\hspace{.15in} m'\cdot a=m'f(a).$$
It can also be considered $B$-symmetric by using \eqref{ComD} because
$$\alpha\cdot m'=f(\varepsilon(\alpha))m'=\varepsilon'(g(\alpha))m'=m'\varepsilon'(g(\alpha))=m'f(\varepsilon(\alpha))=m'\cdot\alpha.$$
Thus $(f,g)$ induces a map
$$(f,g)_*:\Hg_\bullet((A,B,\varepsilon);M')\longrightarrow\Hg_\bullet((A',B',\varepsilon');M')$$
where
$$
(f,g)_*\left(m'
\otimes
\begin{pmatrix}
a_1 & b_{1,2} & \cdots & b_{1,n}\\
1 & a_2 & \cdots & b_{2,n}\\
\vdots & \vdots & \ddots & \vdots\\
1 & 1 & \cdots & a_n\\
\end{pmatrix}
\right)
=m'\otimes\begin{pmatrix}
f(a_1) & g(b_{1,2}) & \cdots & g(b_{1,n})\\
1 & f(a_2) & \cdots & g(b_{2,n})\\
\vdots & \vdots & \ddots & \vdots\\
1 & 1 & \cdots & f(a_n)\\
\end{pmatrix}.
$$
\end{remark}

\begin{remark}
Notice that when one takes $B=k$, this reduces to the usual case where the Hochschild homology is functorial in $M$ and can be viewed as functorial in $A$.
\end{remark}

\section*{Acknowledgement}

I would like to thank my advisor Mihai D. Staic for some suggestions towards the preparation of this document.


\end{document}